\documentclass[conference]{IEEEtran}
\IEEEoverridecommandlockouts
\usepackage{cite}
\usepackage{amsmath,amssymb,amsthm}
\usepackage[utf8]{inputenc}
\usepackage{graphicx}
\usepackage{booktabs}
\usepackage{textcomp}
\usepackage{xcolor}
\usepackage{fancyhdr}

\def\BibTeX{{\rm B\kern-.05em{\sc i\kern-.025em b}\kern-.08em
    T\kern-.1667em\lower.7ex\hbox{E}\kern-.125emX}}

\newtheorem{theorem}{Theorem}
\newtheorem{remark}{Remark}
\newtheorem{proposition}{Proposition}

\begin{document}

\title{An Adaptive KKT-Based Indicator for Convergence Assessment in Multi-Objective Optimization}
\author{\IEEEauthorblockN{Thiago Santos}
\IEEEauthorblockA{\textit{Department of Mathematics} \\
\textit{Federal University of Ouro Preto, UFOP}\\
Ouro Preto, Brazil \\
santostf@ufop.edu.br}
\and
\IEEEauthorblockN{Sebastião Xavier}
\IEEEauthorblockA{\textit{Department of Mathematics} \\
	\textit{Federal University of Ouro Preto, UFOP}\\
	Ouro Preto, Brazil \\
	semarx@ufop.edu.br}}

\maketitle
\pagestyle{fancy}
\fancyhf{}
\fancyfoot[LE, LO]{}
\begin{abstract}
	Performance indicators are essential tools for assessing the convergence behavior of multi-objective optimization algorithms, particularly when the true Pareto front is
	unknown or difficult to approximate. Classical reference-based metrics such as
	hypervolume and inverted generational distance are widely used, but may suffer from
	scalability limitations and sensitivity to parameter choices in many-objective scenarios.
	Indicators derived from Karush--Kuhn--Tucker (KKT) optimality conditions provide an
	intrinsic alternative by quantifying stationarity without relying on external reference
	sets. This paper revisits an entropy-inspired KKT-based convergence indicator and proposes a
	robust adaptive reformulation based on quantile normalization. The proposed indicator
	preserves the stationarity-based interpretation of the original formulation while
	improving robustness to heterogeneous distributions of stationarity residuals, a
	recurring issue in many-objective optimization.
\end{abstract}

\begin{IEEEkeywords}
Multi-objective optimization, Convergence indicators,  KKT conditions,
Quantile normalization,  Performance assessment.
\end{IEEEkeywords}

\section{Introduction}

Performance assessment is a fundamental component of empirical research in multi-objective optimization, as it underpins claims regarding convergence behavior, solution quality, and algorithmic superiority. In recent years, it has become increasingly evident that performance evaluation is not a neutral post-processing step, but rather a methodological choice that can substantially influence experimental conclusions. Comprehensive analyses have shown that different indicators may induce conflicting rankings even when applied to the same approximation sets, an effect that becomes more pronounced as the number of objectives increases \cite{Audet2021}. Consequently, the reliability and interpretability of performance indicators remain central concerns in contemporary many-objective optimization research.

Beyond benchmarking and ranking purposes, convergence indicators also play an important role in computational optimization as algorithmic diagnostics. In particular, they can support the design of stopping criteria, the analysis of transient behavior, and the monitoring of progress when the Pareto front is unknown or impractical to approximate. In this context, indicators that rely on intrinsic optimality information are especially attractive, since they can be applied consistently across problems and experimental settings without requiring external reference sets.

A major source of difficulty in many-objective settings lies in the progressive loss of selection pressure induced by Pareto dominance. As the dimensionality of the objective space grows, dominance relations become increasingly sparse, leading to approximation sets in which most solutions are mutually nondominated. This phenomenon not only complicates algorithm design, but also undermines the effectiveness of dominance-based and dominance-derived performance indicators. As a result, the evaluation of convergence and progress toward Pareto optimality requires alternative mechanisms that remain discriminative under high-dimensional trade-offs.

Among reference-based indicators, the hypervolume measure has been widely adopted due to its strict Pareto compliance and its ability to capture convergence and diversity simultaneously. These properties have made hypervolume a de facto standard in many benchmarking studies. Nevertheless, its practical applicability is constrained by two well-known issues. First, the computational cost of exact hypervolume calculation increases rapidly with the number of objectives, often necessitating approximations. Second, the indicator exhibits a strong dependence on the choice of reference point, which may significantly affect both absolute values and relative comparisons between algorithms \cite{Guerreiro2021}. These limitations have motivated sustained debate regarding the role of hypervolume in many-objective performance assessment.

Distance-based indicators, such as the inverted generational distance (IGD), provide an alternative by quantifying the average proximity of an approximation set to a predefined reference representation of the Pareto front. While conceptually straightforward, IGD inherits a critical dependency on the construction of the reference set. It has been demonstrated that the density, distribution, and placement of reference points can dramatically alter evaluation outcomes, potentially favoring certain solution characteristics while penalizing others \cite{Ishibuchi2016}. In many-objective problems, where accurate and uniformly distributed reference fronts are difficult to obtain, this sensitivity raises concerns about the robustness and reproducibility of IGD-based comparisons \cite{Ishibuchi2018}.

In response to these challenges, a complementary line of research has focused on intrinsic convergence indicators that do not rely on external reference information. For smooth multi-objective optimization problems, first-order necessary conditions for Pareto optimality provide a natural and theoretically grounded basis for convergence assessment. These conditions extend the classical Karush--Kuhn--Tucker (KKT) framework to vector-valued objectives, leading to stationarity concepts that characterize local Pareto optimality. KKT-based proximity measures quantify the extent to which candidate solutions violate these optimality conditions, thereby offering a reference-free indicator directly linked to the problem’s analytical structure \cite{Abouhawwash2016}.

Within this framework, Santos and Xavier proposed an entropy-inspired KKT-based convergence indicator that aggregates stationarity residuals into a bounded scalar measure \cite{Santos2018}. The boundedness of the indicator facilitates interpretation and comparison across runs, while its formulation avoids explicit dependence on reference sets or dominance relations. However, the indicator relies on a fixed saturation parameter that implicitly assumes a uniform scale of stationarity residuals across the approximation set.

In practice, approximation sets often display substantial heterogeneity in convergence status, particularly during intermediate stages of the optimization process. It is common for a population to contain a mixture of nearly stationary solutions and others that remain far from satisfying optimality conditions. Under such circumstances, fixed saturation mechanisms may map a wide range of residual magnitudes to similar indicator contributions, effectively reducing resolution among poorly converged solutions and limiting the indicator’s ability to reflect gradual improvements \cite{Zhang2022}. This loss of discrimination is especially problematic in many-objective scenarios, where convergence progresses unevenly across different regions of the objective space.

Recent contributions to performance assessment have therefore emphasized the need for robustness with respect to heterogeneous solution quality, advocating normalization and scaling strategies that adapt to the statistical structure of approximation sets rather than imposing fixed global parameters \cite{Li2023}. Such approaches aim to enhance consistency and interpretability without altering the underlying convergence concept. Motivated by these considerations, this paper proposes an adaptive reformulation of the entropy-inspired KKT-based convergence indicator, designed to preserve its theoretical grounding while improving discrimination across diverse stationarity regimes.

\section{Background}

We consider a smooth multi-objective optimization problem of the form
\begin{equation}
	\min_{x \in \Omega} \; F(x) = \bigl(f_1(x), f_2(x), \ldots, f_m(x)\bigr),
\end{equation}
where $\Omega \subset \mathbb{R}^n$ is a nonempty feasible set and each objective function
$f_i : \Omega \rightarrow \mathbb{R}$ is continuously differentiable.
A point $x^\star \in \Omega$ is said to be Pareto optimal if there exists no feasible point
$x \in \Omega$ such that $f_i(x) \le f_i(x^\star)$ for all $i \in \{1,\ldots,m\}$ and
$f_j(x) < f_j(x^\star)$ for at least one index $j$.

For smooth problems, Pareto optimality admits a characterization in terms of first-order
necessary conditions.
A feasible point $x^\star$ is Pareto stationary if there exists a vector
$\lambda \in \mathbb{R}^m$ satisfying
\begin{equation}
	\sum_{i=1}^{m} \lambda_i \nabla f_i(x^\star) = 0,
	\qquad
	\lambda_i \ge 0,
	\qquad
	\sum_{i=1}^{m} \lambda_i = 1.
	\label{eq:kkt}
\end{equation}
Condition~\eqref{eq:kkt} generalizes the classical Karush--Kuhn--Tucker framework to the
multi-objective setting and provides a reference-free notion of first-order stationarity
that is widely adopted in continuous multi-objective optimization
\cite{Abouhawwash2016}.

Given a candidate solution $x \in \Omega$, violation of the stationarity condition
\eqref{eq:kkt} can be quantified by solving the convex quadratic program
\begin{equation}
	\min_{\lambda \in \mathbb{R}^m}
	\left\|
	\sum_{i=1}^{m} \lambda_i \nabla f_i(x)
	\right\|^2
	\quad
	\text{subject to}
	\quad
	\lambda_i \ge 0,
	\quad
	\sum_{i=1}^{m} \lambda_i = 1,
	\label{eq:qpp}
\end{equation}
which seeks a convex combination of objective gradients with minimal Euclidean norm.
Let $\lambda^\star(x)$ denote an optimal solution of \eqref{eq:qpp}. The associated vector
\begin{equation}
	q(x) = \sum_{i=1}^{m} \lambda_i^\star(x) \nabla f_i(x)
\end{equation}
vanishes if and only if $x$ satisfies the first-order Pareto stationarity condition.

A scalar measure of stationarity violation can then be defined as
\begin{equation}
	s(x) = \| q(x) \|^2,
	\label{eq:residual}
\end{equation}
which is nonnegative, continuous, and invariant under positive scaling of the objective
functions.
Such residuals form the basis of several KKT-based proximity measures and convergence
indicators, as they provide an intrinsic assessment of optimality that does not rely on
external reference sets or dominance relations
\cite{DebAbouhawwashSeada2017,Eichfelder2021}.

Given a finite approximation set $X = \{x_1, \ldots, x_N\}$, the challenge lies in
aggregating individual residuals $s(x_i)$ into a single scalar quantity that reflects
the overall convergence status of the set.
The entropy-inspired indicator proposed in \cite{Santos2018} addresses this issue by
bounding individual residual contributions through a fixed saturation constant, thereby
ensuring a normalized and interpretable aggregation.
However, this construction implicitly assumes that stationarity residuals exhibit
comparable scales across the approximation set, an assumption that may be violated in
practice when residual distributions are highly heterogeneous or heavy-tailed.

\section{Adaptive $\mathcal{H}$ Indicator}

Let $X = \{x_1,\ldots,x_N\}$ be a finite approximation set.
For each $x_i \in X$, we compute the stationarity residual $s_i = s(x_i)$ defined in \eqref{eq:residual}.
The original entropy-inspired indicator proposed in \cite{Santos2018} aggregates these residuals through a fixed saturation mapping,
\begin{equation}
	\tilde{s}_i = \min \left\{ \frac{1}{e}, \; s_i \right\},
\end{equation}
followed by the entropy-like aggregation
\begin{equation}
	\mathcal{H}_{old}(X)
	=
	-\frac{1}{N}
	\sum_{i=1}^{N}
	\tilde{s}_i \log \bigl( \tilde{s}_i \bigr),
	\label{eq:Hold}
\end{equation}
with the convention $0 \log 0 = 0$.

The fixed saturation threshold employed in $\mathcal{H}_{old}$ implicitly assumes that stationarity residuals are well scaled and lie within a bounded range.
In heterogeneous approximation sets, however, residuals may span several orders of magnitude.
In such cases, large residual values are mapped to identical contributions, which reduces the indicator’s discriminatory power and limits its ability to reflect gradual convergence improvements.
To address this limitation, we introduce an adaptive normalization strategy grounded in empirical quantiles.

Let $Q_{\alpha}$ and $Q_{\beta}$ denote the lower and upper empirical quantiles of the residual set $\{s_1,\ldots,s_N\}$, with $0 < \alpha < \beta < 1$.
We define winsorized residuals as
\begin{equation}
	\hat{s}_i =
	\min\left\{
	\max\left\{ s_i, \; Q_{\alpha} \right\},
	\; Q_{\beta}
	\right\},
	\label{eq:winsor}
\end{equation}
which bounds extreme values while preserving the relative ordering of residuals within the central portion of the distribution.

Normalized residuals are then computed according to
\begin{equation}
	z_i =
	\frac{\hat{s}_i - Q_{\alpha}}
	{Q_{\beta} - Q_{\alpha} + \varepsilon},
	\label{eq:znorm}
\end{equation}
where $\varepsilon > 0$ is a numerical regularization parameter introduced solely to avoid floating-point degeneracy.
The proposed adaptive indicator is defined by
\begin{equation}
	\mathcal{H}_{adap}(X)
	=
	-\frac{1}{N}
	\sum_{i=1}^{N}
	z_i \log \bigl( z_i + \varepsilon \bigr).
	\label{eq:Hadap}
\end{equation}

For theoretical analysis, we consider the idealized form of $\mathcal{H}_{adap}$ obtained by setting $\varepsilon = 0$.
This simplification preserves all structural properties of the indicator while facilitating exposition.

\begin{theorem}[Boundedness and scale invariance of the adaptive $\mathcal{H}$ indicator]
	Let $X=\{x_1,\dots,x_N\}$ be a finite approximation set and let $s_i=s(x_i)$ be the KKT stationarity residuals defined by
	\begin{equation}
		s(x)=\left\|\sum_{j=1}^m \lambda^\star_j(x)\nabla f_j(x)\right\|^2,
	\end{equation}
	where $\lambda^\star(x)$ solves the quadratic program in \eqref{eq:qpp}.
	Fix $0<\alpha<\beta<1$ and assume $Q_\beta>Q_\alpha$.
	Define $\mathcal{H}_{adap}(X)$ by \eqref{eq:Hadap} with $\varepsilon=0$.
	Then:
	\begin{enumerate}
		\item[(i)] (Boundedness) $0 \le \mathcal{H}_{adap}(X) \le \frac{1}{e}$.
		\item[(ii)] (Invariance under common positive scaling)
		If all objectives are scaled by a common factor $c>0$, i.e.,
		$f_j^{(c)}(x)=c\,f_j(x)$ for all $j$, then $\mathcal{H}_{adap}(X)$ is invariant under this transformation.
	\end{enumerate}
\end{theorem}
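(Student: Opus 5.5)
The plan is to reduce both parts to elementary facts about the map $\phi(t) = -t\log t$ on $[0,1]$ and about how quantiles behave under monotone rescaling.

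\textbf{Part (i).} I first localize the normalized residuals. Since $Q_\alpha \le Q_\beta$, the winsorization \eqref{eq:winsor} gives $Q_\alpha \le \hat{s}_i \le Q_\beta$ for every $i$. By the assumption $Q_\beta > Q_\alpha$, with $\varepsilon=0$ the normalization \eqref{eq:znorm} is well defined, and
\[
z_i = \frac{\hat{s}_i - Q_\alpha}{Q_\beta - Q_\alpha} \in [0,1].
\]
With $\varepsilon = 0$ we have $\mathcal{H}_{adap}(X) = \frac{1}{N}\sum_i \phi(z_i)$, using the convention $0\log 0 = 0$, so $\phi$ is continuous on $[0,1]$.

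Next I study $\phi$ on $[0,1]$. Its derivative is $\phi'(t) = -\log t - 1$, which vanishes only at $t = 1/e$. Moreover $\phi(0) = \phi(1) = 0$ and $\phi(1/e) = 1/e$. For $t \in [0,1]$ we have $\log t \le 0$, hence $\phi(t) \ge 0$. Therefore $0 \le \phi(z_i) \le 1/e$ for each $i$. Averaging over the $N$ terms preserves these bounds, which gives (i).

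\textbf{Part (ii).} Under $f_j^{(c)} = c f_j$ the gradients scale to $c\nabla f_j$. The feasible set of \eqref{eq:qpp} does not depend on $c$, and the objective is multiplied by $c^2$. Hence the minimizer set is unchanged, and we may take $\lambda^\star$ to be the same. Even if the minimizer is not unique, the optimal value, which is exactly $s(x)$, simply scales. So $s_i^{(c)} = c^2 s_i$ for all $i$.

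Empirical quantiles commute with multiplication by a positive constant, because the map $t \mapsto c^2 t$ is strictly increasing and so preserves order statistics and any interpolation rule built on them. Thus $Q_\alpha^{(c)} = c^2 Q_\alpha$ and $Q_\beta^{(c)} = c^2 Q_\beta$. In particular $Q_\beta^{(c)} > Q_\alpha^{(c)}$, so the hypothesis of the theorem continues to hold after scaling.

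Min and max also commute with positive scaling, so $\hat{s}_i^{(c)} = c^2 \hat{s}_i$. In the ratio \eqref{eq:znorm} with $\varepsilon=0$ the factor $c^2$ cancels, giving $z_i^{(c)} = z_i$. It follows that $\mathcal{H}_{adap}$ is unchanged.

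\textbf{Main obstacle.} The only delicate point is the quantile step. I will state explicitly that the empirical quantile is any order-statistic-based rule, for example linear interpolation between order statistics. Such a rule is positively homogeneous, and that property is what the argument uses.

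It is also worth remarking that the argument relies on $\varepsilon = 0$. With $\varepsilon>0$, invariance would fail exactly, since $\varepsilon$ does not scale with $c^2$, and would hold only approximately.
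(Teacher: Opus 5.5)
Your proof is correct and follows essentially the same route as the paper: winsorization gives $z_i\in[0,1]$, the bound $0\le -t\log t\le 1/e$ on $[0,1]$ is averaged over $i$, and then $s_i^{(c)}=c^2 s_i$ together with $c^2$-equivariance of the quantiles and of the clipping makes $c^2$ cancel in $z_i$. Your added care is sound: you use the optimal value to sidestep non-uniqueness of $\lambda^\star$, you check that $Q_\beta^{(c)}>Q_\alpha^{(c)}$ persists, and you note that exact invariance needs $\varepsilon=0$. One small correction: interpolated quantiles scale because $t\mapsto c^2t$ is linear, not merely increasing, and this is the positive homogeneity you correctly invoke at the end.
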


\begin{proof}
	(i) By construction, $\hat{s}_i \in [Q_\alpha,Q_\beta]$, which implies $z_i \in [0,1]$ for all $i$.
	Consider the function $\phi(t)=-t\log t$ on $[0,1]$ with $\phi(0)=0$.
	Elementary calculus shows that $\phi$ attains its maximum at $t=1/e$ and satisfies
	\[
	0 \le \phi(t) \le \frac{1}{e}
	\quad \text{for all } t \in [0,1].
	\]
	Therefore,
	\[
	0 \le \mathcal{H}_{adap}(X)
	= \frac{1}{N}\sum_{i=1}^N \phi(z_i)
	\le \frac{1}{N}\sum_{i=1}^N \frac{1}{e}
	= \frac{1}{e}.
	\]
	
	(ii) Let $f_j^{(c)}(x)=c f_j(x)$ with $c>0$.
	Then $\nabla f_j^{(c)}(x)=c\nabla f_j(x)$ and, for any feasible $\lambda$,
	\[
	\left\|\sum_{j=1}^m \lambda_j \nabla f_j^{(c)}(x)\right\|^2
	=
	c^2\left\|\sum_{j=1}^m \lambda_j \nabla f_j(x)\right\|^2.
	\]
	Hence, the optimizer $\lambda^\star(x)$ is unchanged and $s_i^{(c)}=c^2 s_i$.
	Since empirical quantiles are equivariant under positive scaling,
	$Q_\alpha^{(c)}=c^2 Q_\alpha$ and $Q_\beta^{(c)}=c^2 Q_\beta$.
	It follows that $z_i^{(c)}=z_i$ for all $i$, and therefore
	$\mathcal{H}_{adap}^{(c)}(X)=\mathcal{H}_{adap}(X)$.
\end{proof}

\begin{proposition}[Computational complexity of the adaptive $\mathcal{H}$ indicator]\label{prop_computacional}
	Let $N$ be the population size, $m$ the number of objectives, and $n$ the dimension of the decision space.
	The computational complexity of $\mathcal{H}_{adap}$ is $\mathcal{O}(N(nm^2 + m^3))$, with an additional overhead of $\mathcal{O}(N \log N)$ due to quantile estimation.
\end{proposition}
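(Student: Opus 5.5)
We will bound the cost of the three stages that define $\mathcal{H}_{adap}$ and add them up: computing the residuals $s_i$, estimating the quantiles, and carrying out the winsorization, normalization and aggregation in \eqref{eq:winsor}--\eqref{eq:Hadap}. We assume, as is implicit in the statement, that the gradients $\nabla f_j(x_i)$ are available, or can be evaluated at a cost charged to the problem rather than to the indicator. We also assume a standard solver for small dense quadratic programs.

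\textbf{Residual computation (per point).} For a fixed $x_i$, stack the gradients into $G=[\nabla f_1(x_i),\ldots,\nabla f_m(x_i)]\in\mathbb{R}^{n\times m}$. Then \eqref{eq:qpp} becomes $\min_{\lambda\in\Delta_m}\lambda^\top (G^\top G)\lambda$, a QP in only $m$ variables with $m+1$ constraints. The Gram matrix $G^\top G$ has $m^2$ entries, each an inner product in $\mathbb{R}^n$, so forming it costs $\mathcal{O}(nm^2)$. An active-set or interior-point method working on the $m\times m$ reduced problem needs $\mathcal{O}(m^3)$ arithmetic per factorization. The main obstacle is the number of iterations. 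We treat it as bounded by a constant independent of $n$ and $N$, which is standard for fixed-accuracy interior-point counts on problems of dimension $m$, or for active-set methods on small simplices. We state this explicitly as the modeling assumption under which the QP cost is $\mathcal{O}(m^3)$. Recovering $q(x_i)=G\lambda^\star$ and $s_i=\|q(x_i)\|^2$ costs another $\mathcal{O}(nm)$, which is dominated. Over all $N$ points this stage costs $\mathcal{O}(N(nm^2+m^3))$.

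\textbf{Quantiles and aggregation.} Empirical quantiles $Q_\alpha,Q_\beta$ can be obtained by sorting $\{s_1,\ldots,s_N\}$ in $\mathcal{O}(N\log N)$. A linear-time selection algorithm would give $\mathcal{O}(N)$, so the stated overhead is an upper bound. Given the two quantiles, computing $\hat s_i$, $z_i$ and the summand $z_i\log(z_i+\varepsilon)$ takes $\mathcal{O}(1)$ per point, so this final pass costs $\mathcal{O}(N)$. Summing the three stages gives $\mathcal{O}(N(nm^2+m^3))+\mathcal{O}(N\log N)$, as claimed.
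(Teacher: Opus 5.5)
Your proposal is correct and follows essentially the same route as the paper: forming $G^\top G$ in $\mathcal{O}(nm^2)$ and solving the $m$-variable QP in $\mathcal{O}(m^3)$ per point, summed over $N$ points, then $\mathcal{O}(N\log N)$ sorting for the quantiles and a linear pass for winsorization, normalization and aggregation. The only difference is that you state explicitly two assumptions the paper leaves implicit: gradient evaluation is charged to the problem, and the QP iteration count is bounded (for the $\mathcal{O}(m^3)$ bound this count should be independent of $m$ as well, not only of $n$ and $N$).
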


\begin{proof}
	The computation of $\mathcal{H}_{adap}(X)$ proceeds through distinct algorithmic phases.
	First, the stationarity residuals $\{s_1, \dots, s_N\}$ must be computed.
	For each solution $x_i$, this requires evaluating the Jacobian matrix $G \in \mathbb{R}^{n \times m}$ and solving the quadratic program in \eqref{eq:qpp}.
	The construction of the Hessian $H = 2G^T G \in \mathbb{R}^{m \times m}$ incurs a cost of $\mathcal{O}(nm^2)$.
	Solving the resulting convex quadratic program involves algebraic operations on an $m \times m$ matrix, leading to $\mathcal{O}(m^3)$ complexity.
	Repeating this process for the entire population yields a cost of $\mathcal{O}(N(nm^2 + m^3))$.
	
	Second, the computation of the empirical quantiles $Q_{\alpha}$ and $Q_{\beta}$ requires sorting the $N$ residuals, which has complexity $\mathcal{O}(N \log N)$.
	The subsequent winsorization, normalization, and entropy aggregation steps are linear in $N$ and therefore do not affect the asymptotic complexity.
	
	In many-objective regimes where $m$ is sufficiently large (e.g., $m \ge 30$), the constant factor associated with $m^3$ renders the per-solution cost of the quadratic programming phase substantially higher than the cost of sorting.
	As a result, the overhead introduced by adaptive quantile estimation is dominated by the computation of stationarity residuals in practical settings.
\end{proof}

\begin{remark}[Implications for ultra-many-objective optimization]
	In regimes characterized by a high number of objectives, the cubic dependence on $m$ in Proposition~\ref{prop_computacional} warrants consideration.
	For representative values such as $m=30$, the algebraic cost $m^3 = 27{,}000$ remains negligible for contemporary computing architectures, indicating that the quadratic programming phase does not constitute a computational bottleneck.
	
	By contrast, when analytical gradients are unavailable, numerical differentiation via finite differences requires $2m$ function evaluations per solution.
	For computationally expensive objectives, this may limit the practical applicability of the indicator to problems with low evaluation cost or readily available derivatives.
	
	Despite these considerations, the $\mathcal{H}_{adap}$ formulation is structurally advantageous in high-dimensional objective spaces.
	In ultra-many-objective scenarios, stationarity residuals commonly exhibit heavy-tailed distributions and high variance due to the curse of dimensionality.
	The fixed saturation mechanism of $\mathcal{H}_{old}$ collapses distinct non-stationary magnitudes into identical upper bounds, effectively eliminating discriminatory resolution.
	In contrast, the quantile-based normalization of $\mathcal{H}_{adap}$ adapts to the empirical distribution of residuals, preserving resolution among near-Pareto-stationary solutions and yielding more reliable convergence diagnostics for problems with $m \ge 30$.
\end{remark}

Both indicators satisfy $\mathcal{H}_{old}(X)\to 0$ and $\mathcal{H}_{adap}(X)\to 0$ as all points in $X$ approach Pareto stationarity.
Moreover, when the residual distribution lies entirely below the saturation threshold of $\mathcal{H}_{old}$, the adaptive formulation reduces to a monotone transformation of the original indicator.
This ensures conceptual consistency while providing increased robustness in heterogeneous convergence regimes.

In the following section, we present experimental results comparing $\mathcal{H}_{old}$ and $\mathcal{H}_{adap}$ with other classical performance indicators.

\section{Simulation Results}

This section investigates the empirical behavior of the proposed adaptive indicator
$\mathcal{H}_{adap}$ in comparison with the original formulation $\mathcal{H}_{old}$ and
with two classical performance metrics commonly employed in multi-objective optimization,
namely the averaged Hausdorff distance $\Delta_p$ and the Hypervolume (HV).
The experimental objective is not to assert the superiority of one indicator over another,
but rather to analyze consistency, resolution, and robustness across heterogeneous
many-objective scenarios.

The experimental evaluation is conducted on five benchmark problems from the DTLZ test
suite, namely DTLZ1 through DTLZ5.
These problems are widely adopted in the many-objective optimization literature due to
their ability to expose distinct convergence pathologies and geometric characteristics of
Pareto fronts.
DTLZ1 is characterized by a linear Pareto front combined with a large number of local optima,
making it suitable for evaluating convergence reliability.
DTLZ2 and DTLZ4 share a hyperspherical Pareto front but differ in the distribution of
objective scaling, with DTLZ4 introducing strong bias that challenges diversity preservation.
DTLZ3 extends DTLZ2 by incorporating a highly multimodal landscape, which exacerbates
dominance resistance and indicator degeneracy.
Finally, DTLZ5 induces a degenerate Pareto front with reduced intrinsic dimensionality,
providing a stringent test for the sensitivity of performance indicators to structural
irregularities.
Together, these problems form a representative benchmark set for assessing indicator
behavior under heterogeneous convergence regimes commonly encountered in many-objective
optimization.

All problems are instantiated with $m=12$ objectives in order to ensure a genuine
many-objective setting, in which dominance relations become sparse and indicator-based
assessment plays a central role.
This configuration reflects practical regimes where intrinsic convergence indicators are
most informative.

The algorithms considered in this study belong to a representative class of
state-of-the-art many-objective evolutionary optimization methods implemented in the
PlatEMO framework \cite{PlatEMO}.
They embody distinct algorithmic principles for addressing the challenges posed by
many-objective problems, including reference-point-based selection, cooperative constraint
handling, and geometry-driven search mechanisms.
In particular, the experimental evaluation includes:
\begin{itemize}
	\item NSGA-III, which extends nondominated sorting through a reference-point-based
	selection strategy tailored to many-objective optimization \cite{Deb2013};
	\item CMOEA-CD, a constrained many-objective evolutionary algorithm that exploits
	cooperative correlation mechanisms to balance feasibility and convergence
	\cite{CMOEACD2025};
	\item NRVMOEA, a normal-vector-guided evolutionary algorithm designed to enhance search
	efficiency and diversity maintenance in high-dimensional objective spaces
	\cite{Hua2024NRVMOEA}.
\end{itemize}

For all benchmark problems, a population size of $100$ individuals is employed.
Each algorithm is independently executed $30$ times, with a termination criterion of
$25{,}000$ objective function evaluations per run.
All simulations are carried out within the PlatEMO framework to ensure consistent
implementations and reproducibility of the reported performance metrics.

The performance comparison on the first benchmark problem is reported in
Table~\ref{tab:prob1}.
The table summarizes the mean and standard deviation of $\mathcal{H}_{adap}$,
$\mathcal{H}_{old}$, $\Delta_p$, and HV over the $30$ independent runs.
Lower values indicate better performance for $\mathcal{H}_{adap}$, $\mathcal{H}_{old}$, and
$\Delta_p$, while higher values are preferable for HV.

\begin{table*}[!htp]
	\centering
	\caption{Performance comparison on DTLZ1 ($m=12$).}
	\label{tab:prob1}
	\begin{tabular}{lcccc}
		\toprule
		Algorithm & $\mathcal{H}_{adap}\downarrow$ & $\mathcal{H}_{old}\downarrow$ & $\Delta_p\downarrow$ & HV$\uparrow$ \\
		\midrule
		NSGA-III  & $7.01\text{e-2}\;(8.19\text{e-2})$ & $3.50\text{e-2}\;(2.93\text{e-2})$ & $1.17\text{e+0}\;(1.94\text{e+0})$ & $5.56\text{e-1}\;(3.81\text{e-1})$ \\
		NRVMOEA  & $6.16\text{e-2}\;(8.88\text{e-2})$ & $3.46\text{e-2}\;(1.62\text{e-2})$ & $1.43\text{e+2}\;(1.59\text{e+1})$ & $0.00\text{e+0}\;(0.00\text{e+0})$ \\
		CMOEA-CD & $2.08\text{e-1}\;(9.88\text{e-3})$ & $2.32\text{e-3}\;(3.46\text{e-3})$ & $6.27\text{e+1}\;(5.07\text{e+1})$ & $3.80\text{e-3}\;(2.08\text{e-2})$ \\
		\bottomrule
	\end{tabular}
\end{table*}

The results in Table~\ref{tab:prob1} illustrate the loss of resolution exhibited by
classical indicators in high-dimensional objective spaces.
While $\mathcal{H}_{old}$ assigns similar values to NSGA-III and NRVMOEA, the adaptive
formulation $\mathcal{H}_{adap}$ yields clearer differentiation between the algorithms.
In contrast, the Hypervolume metric assigns values close to zero to NRVMOEA and CMOEA-CD,
indicating limited dominated volume coverage in this scenario.

Table~\ref{tab:prob2} reports the corresponding results for the second benchmark problem.

\begin{table*}[!htp]
	\centering
	\caption{Performance comparison on DTLZ2 ($m=12$).}
	\label{tab:prob2}
	\begin{tabular}{lcccc}
		\toprule
		Algorithm & $\mathcal{H}_{adap}\downarrow$ & $\mathcal{H}_{old}\downarrow$ & $\Delta_p\downarrow$ & HV$\uparrow$ \\
		\midrule
		NSGA-III  & $5.81\text{e-2}\;(3.76\text{e-2})$ & $2.28\text{e-2}\;(2.02\text{e-2})$ & $6.34\text{e-1}\;(5.43\text{e-2})$ & $9.14\text{e-1}\;(1.55\text{e-1})$ \\
		NRVMOEA  & $6.39\text{e-2}\;(1.60\text{e-2})$ & $1.33\text{e-2}\;(3.16\text{e-3})$ & $5.74\text{e-1}\;(5.27\text{e-3})$ & $9.67\text{e-1}\;(1.88\text{e-3})$ \\
		CMOEA-CD & $1.36\text{e-1}\;(1.43\text{e-2})$ & $1.98\text{e-1}\;(2.19\text{e-2})$ & $8.64\text{e-1}\;(1.25\text{e-1})$ & $4.57\text{e-1}\;(1.12\text{e-1})$ \\
		\bottomrule
	\end{tabular}
\end{table*}

For this problem, all indicators exhibit more regular behavior.
Nevertheless, the adaptive indicator introduces stronger differentiation between the
algorithms than the original formulation, while the Hypervolume metric confirms the superior
coverage achieved by NRVMOEA.

The results for the third benchmark problem are summarized in Table~\ref{tab:prob3}.

\begin{table*}[!htp]
	\centering
	\caption{Performance comparison on DTLZ3 ($m=12$).}
	\label{tab:prob3}
	\begin{tabular}{lcccc}
		\toprule
		Algorithm & $\mathcal{H}_{adap}\downarrow$ & $\mathcal{H}_{old}\downarrow$ & $\Delta_p\downarrow$ & HV$\uparrow$ \\
		\midrule
		NSGA-III  & $4.95\text{e-2}\;(2.57\text{e-2})$ & $1.46\text{e-1}\;(5.20\text{e-2})$ & $5.18\text{e+1}\;(2.72\text{e+1})$ & $0.00\text{e+0}\;(0.00\text{e+0})$ \\
		NRVMOEA  & $9.92\text{e-2}\;(1.84\text{e-2})$ & $2.45\text{e-1}\;(2.40\text{e-2})$ & $6.41\text{e+2}\;(4.51\text{e+1})$ & $0.00\text{e+0}\;(0.00\text{e+0})$ \\
		CMOEA-CD & $1.29\text{e-1}\;(2.96\text{e-2})$ & $2.33\text{e-1}\;(3.15\text{e-2})$ & $3.00\text{e+2}\;(1.46\text{e+2})$ & $0.00\text{e+0}\;(0.00\text{e+0})$ \\
		\bottomrule
	\end{tabular}
\end{table*}

In this setting, the Hypervolume metric becomes uninformative, as all algorithms attain zero
values.
While $\mathcal{H}_{old}$ exhibits compressed values due to saturation, the adaptive
formulation maintains a non-saturated scale across all algorithms, thereby preserving
effective resolution.

Tables~\ref{tab:prob4} and~\ref{tab:prob5} report the remaining results.

\begin{table*}[!htp]
	\centering
	\caption{Performance comparison on DTLZ4 ($m=12$).}
	\label{tab:prob4}
	\begin{tabular}{lcccc}
		\toprule
		Algorithm & $\mathcal{H}_{adap}\downarrow$ & $\mathcal{H}_{old}\downarrow$ & $\Delta_p\downarrow$ & HV$\uparrow$ \\
		\midrule
		NSGA-III  & $1.95\text{e-1}\;(1.35\text{e-2})$ & $8.37\text{e-8}\;(1.00\text{e-8})$ & $6.32\text{e-1}\;(3.01\text{e-2})$ & $9.46\text{e-1}\;(2.95\text{e-2})$ \\
		NRVMOEA  & $1.88\text{e-1}\;(1.26\text{e-2})$ & $9.37\text{e-8}\;(7.08\text{e-9})$ & $5.72\text{e-1}\;(2.70\text{e-3})$ & $9.72\text{e-1}\;(3.87\text{e-4})$ \\
		CMOEA-CD & $2.19\text{e-1}\;(9.75\text{e-3})$ & $1.80\text{e-7}\;(1.23\text{e-8})$ & $6.96\text{e-1}\;(1.41\text{e-2})$ & $7.14\text{e-1}\;(4.22\text{e-2})$ \\
		\bottomrule
	\end{tabular}
\end{table*}

Here, the original indicator collapses due to saturation effects, whereas the adaptive
formulation preserves meaningful differences between the algorithms and remains consistent
with the ordering suggested by Hypervolume.

\begin{table*}[!htp]
	\centering
	\caption{Performance comparison on DTLZ5 ($m=12$).}
	\label{tab:prob5}
	\begin{tabular}{lcccc}
		\toprule
		Algorithm & $\mathcal{H}_{adap}\downarrow$ & $\mathcal{H}_{old}\downarrow$ & $\Delta_p\downarrow$ & HV$\uparrow$ \\
		\midrule
		NSGA-III  & $1.05\text{e-1}\;(2.76\text{e-2})$ & $2.87\text{e-2}\;(1.25\text{e-2})$ & $8.00\text{e-1}\;(7.57\text{e-2})$ & $7.33\text{e-2}\;(1.18\text{e-2})$ \\
		NRVMOEA  & $9.26\text{e-2}\;(1.74\text{e-2})$ & $3.36\text{e-2}\;(1.05\text{e-2})$ & $1.36\text{e+0}\;(7.90\text{e-2})$ & $2.65\text{e-2}\;(2.45\text{e-2})$ \\
		CMOEA-CD & $8.97\text{e-2}\;(1.42\text{e-2})$ & $4.22\text{e-2}\;(6.46\text{e-3})$ & $2.62\text{e+0}\;(4.95\text{e-2})$ & $1.91\text{e-2}\;(2.61\text{e-2})$ \\
		\bottomrule
	\end{tabular}
\end{table*}

Overall, the experimental results indicate that $\mathcal{H}_{adap}$ behaves consistently
with established performance indicators while mitigating saturation and resolution loss
induced by fixed normalization schemes.
Rather than replacing classical metrics, the adaptive formulation serves as a complementary
tool for convergence assessment in many-objective optimization, particularly in scenarios
characterized by heterogeneous stationarity residual distributions.

\section{Conclusion}

This paper revisited an entropy-inspired KKT-based convergence indicator and proposed a
robust adaptive reformulation based on quantile normalization.
The proposed indicator preserves the stationarity-based interpretation of the original
formulation while improving robustness to heterogeneous distributions of stationarity
residuals, a recurring challenge in many-objective optimization.
By replacing fixed saturation thresholds with a distribution-aware normalization scheme,
the adaptive formulation enhances discriminatory power without introducing
problem-dependent tuning parameters or additional algorithmic complexity.

Empirical results obtained on a diverse set of DTLZ benchmark problems indicate that the
adaptive indicator behaves consistently with established convergence measures while
mitigating resolution loss and saturation effects observed in classical formulations.
In particular, scenarios in which the original indicator collapses to near-identical values
or reference-based metrics such as hypervolume become uninformative highlight the practical
relevance of adaptive normalization for convergence assessment in high-dimensional objective
spaces.

From a broader perspective, the proposed indicator is aligned with recent developments in
optimality-based proximity measures and intrinsic performance assessment in
multi-objective optimization.
Rather than competing directly with reference-based indicators or distance-based measures,
the adaptive formulation provides a complementary perspective grounded in first-order
optimality conditions.
This viewpoint is especially relevant in many-objective regimes, where dominance relations
weaken, reference information becomes unreliable, and indicator degeneracy is frequently
observed.

Future work may investigate theoretical convergence properties of the adaptive indicator,
its extension to constrained, noisy, and stochastic optimization settings, and its
integration into algorithmic components such as adaptive stopping criteria and online
convergence monitoring.
Such directions may further consolidate the role of optimality-driven indicators as
practical and reliable tools for convergence assessment in complex multi-objective
optimization problems.

\bibliographystyle{ieeetr}
\bibliography{bibliography}

\end{document}